\renewcommand\section{\@startsection{section}{1}{\z@}%
                                  {-3.5ex \@plus -1ex \@minus -.2ex}%
                                  {2.3ex \@plus.2ex}%
                                  {\normalfont\large\bfseries}}
\begin{document}

\title{Sufficient condition for Reed's conjecture}

\author{Misa Nakanishi \thanks{E-mail address : nakanishi@2004.jukuin.keio.ac.jp}}
\date{}
\maketitle

\begin{abstract}
From the research of several
recent papers, we are concerned with domination
number in cubic graphs and give a sufficient condition for Reed's conjecture. \\
keywords: cubic graph, minimum dominating set, independent dominating set
\end{abstract}

\newtheorem{thm}{Theorem}[section]
\newtheorem{lem}{Lemma}[section]
\newtheorem{prop}{Proposition}[section]
\newtheorem{cor}{Corollary}[section]
\newtheorem{rem}{Remark}[section]
\newtheorem{conj}{Conjecture}[section]
\newtheorem{claim}{Claim}[section]
\newtheorem{obs}{Observation}[section]
\newtheorem{fact}{Fact}[section]

\newtheorem{defn}{Definition}[section]
\newtheorem{propa}{Proposition}
\renewcommand{\thepropa}{\Alph{propa}}
\newtheorem{conja}[propa]{Conjecture}

\section{Introduction}
\label{intro}

\noindent This study considers a graph $G$, which is finite, undirected, and simple, with the vertex set $V$ and edge set $E$. We follow the notations presented in \cite{Diestel}. For a vertex $v \in V(G)$, the open neighborhood, denoted by $N_G(v)$, is $\{ u \in V(G) \colon\ uv \in E(G) \}$, and the closed neighborhood, denoted by $N_G[v]$, is $N_G(v) \cup \{v\}$; in addition, for a set $W \subseteq V(G)$, let $N_G(W) = {\displaystyle \bigcup_{v \in W}} N_G(v)$ and $N_G[W] = N_G(W) \cup W$. A {\it dominating
set} $X \subseteq V(G)$ is such that $N_G[X] = V(G)$. A minimum dominating set is called a {\it d-set}. The minimum cardinality taken over all minimal dominating sets of $G$ is the {\it domination number} denoted by $\gamma(G)$. The minimum cardinality taken over all maximal independent sets of $G$ is the {\it independent domination number} denoted by $i(G)$. For a dominating set $X \subseteq V(G)$ and a set $R \subseteq V(G)$, $X(R)$ denotes $X \cap R$. For a set $S \subseteq V(G)$, as is clear from the context, $S$ denotes $G[S]$. \\

\noindent For the domination number of a graph, in decades, cubic graphs have been intensively studied and several important results were shown. The complexity of a minimum dominating set (MDS) in cubic graphs is NP-hard \cite{Alimonti}. A random 3-regular graph asymptotically almost surely has no 3-star factors \cite{Assiyatun}. Reed indicated that almost all cubic graphs are Hamiltonian. In addition, the upper bound of the domination number of a connected cubic graph $G$ was conjectured as being $\lceil |V(G)|/3 \rceil$ \cite{Reed}. Then, the counterexamples that exceed the bound demonstrate that, for example, an extremal graph of the domination number of 21 over 60 vertices exists, following the series of cubic graphs beyond the boundary \cite{Kostochka} \cite{Kelmans} \cite{Stodolsky}. \\

\noindent In this paper, we show that the connected cubic graphs that have the domination number above the bound have a minimum dominating set as an independent set. Otherwise, the conjecture is true. \\

\noindent A sufficient condition for a general graph $G$ to have $\gamma(G) = i(G)$ was represented as an induced subgraph isomorphic to $K_{1, 3}$, also called 3-star, free \cite{Allan}. Next, a double star such that both centers have degrees at least three, say $I$, was introduced. We observed $I$ as a forbidden subgraph for $\gamma(G) = i(G)$ with the simplest proof.

\begin{propa}[\cite{Allan}]
If a graph $G$ does not have an induced subgraph isomorphic to $K_{1, 3}$, then $\gamma(G) = i(G)$.
\end{propa}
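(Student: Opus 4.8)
The plan is to prove both inequalities $\gamma(G) \le i(G)$ and $i(G) \le \gamma(G)$. The first is immediate and holds for every graph: any maximal independent set is a dominating set, so the minimum size of a maximal independent set is at least $\gamma(G)$, giving $\gamma(G) \le i(G)$. The whole content therefore lies in producing, under the $K_{1,3}$-free hypothesis, a dominating set of size $\gamma(G)$ that is \emph{independent}, which yields $i(G) \le \gamma(G)$. To this end I would fix a minimum dominating set $D$ (a $d$-set) chosen so that the number of edges of $G[D]$ is as small as possible, and argue that $G[D]$ must be edgeless.

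Suppose instead that $G[D]$ has an edge, and let $v \in D$ be an endpoint, so that $B_v := N_G(v) \cap D \ne \emptyset$. Since $D$ is minimum it is minimal, so $v$ has a private neighbour; because $v$ has a neighbour in $D$, this private neighbour cannot be $v$ itself, so the set $P_v$ of external private neighbours of $v$ (vertices $w \notin D$ with $N_G(w) \cap D = \{v\}$) is non-empty. The key step is to extract structure from claw-freeness: every vertex of $P_v$ and every vertex of $B_v$ lies in $N_G(v)$, and no vertex of $P_v$ is adjacent to any vertex of $B_v$, since a vertex of $P_v$ has $v$ as its only neighbour in $D$. Hence, fixing any $b \in B_v$, two non-adjacent vertices $p_1, p_2 \in P_v$ would make $\{v; p_1, p_2, b\}$ an induced $K_{1,3}$; as $G$ has no such subgraph, $P_v$ must be a clique.

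With $P_v$ a clique, any single $p \in P_v$ dominates all of $P_v$, which permits the decisive exchange $D' = (D \setminus \{v\}) \cup \{p\}$. I would verify that $D'$ is still dominating: $v$ is dominated by $p$ (or by any $b \in B_v$), every private neighbour of $v$ lies in $N_G[p]$ because $P_v$ is a clique, and every other neighbour of $v$ already had a dominator in $D \setminus \{v\}$. Thus $|D'| = |D| = \gamma(G)$, so $D'$ is again a $d$-set. I then count edges: $v$ contributed at least the $|B_v| \ge 1$ edges joining it to $B_v$, whereas $p$, whose only neighbour in $D$ was $v \notin D'$, contributes none, and no other edge of $G[D]$ changes. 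Hence $G[D']$ has strictly fewer edges than $G[D]$, contradicting the choice of $D$. Therefore $G[D]$ is edgeless, $D$ is an independent dominating set of size $\gamma(G)$, and $i(G) \le \gamma(G)$, completing the argument.

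I expect the main obstacle to be the exchange step, specifically guaranteeing that replacing $v$ by a single private neighbour $p$ does not leave some other private neighbour of $v$ undominated; this is exactly where claw-freeness is essential, since it forces $P_v$ to be a clique so that one chosen $p$ covers the rest. A secondary point requiring care is the bookkeeping that the swap strictly decreases the edge count of $G[D]$ rather than merely relocating edges, which relies on the fact that a private neighbour has no neighbour in $D$ other than $v$.
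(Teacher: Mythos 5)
Your proof is correct. Note that the paper states this proposition without proof (it is quoted from Allan and Laskar), so there is no in-paper argument to compare against; your argument is the standard one, and it runs on the same engine as the paper's proof of the companion Proposition B: fix a d-set minimizing the number of induced edges, locate an external private neighbour of an endpoint of an induced edge, and swap it in to strictly decrease the edge count. The one step specific to the claw-free hypothesis --- that the external private neighbours of $v$ form a clique, so a single one of them re-dominates all the others after the exchange --- is identified and justified correctly.
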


\begin{propa}[\cite{Cockayne}]
For a graph $G$, if $I \not \subseteq G$, then $\gamma(G) = i(G)$. 
\end{propa}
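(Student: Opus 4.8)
The plan is to prove the nontrivial inequality $i(G)\le\gamma(G)$ by producing an independent d-set; the reverse bound $\gamma(G)\le i(G)$ is immediate, since every maximal independent set is a dominating set and hence witnesses a dominating set of size $i(G)$. I would fix a d-set $D$ of $G$ (so $|D|=\gamma(G)$) chosen, among all d-sets, to minimize the number of edges of the induced subgraph $G[D]$. It then suffices to show $G[D]$ has no edge: an edgeless $D$ is an independent dominating set, hence a maximal independent set, which gives $i(G)\le|D|=\gamma(G)$ and closes the argument.

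The engine is the notion of a \emph{private neighbor}: for $w\in D$, a vertex $p\in N_G[w]$ with $N_G[p]\cap D=\{w\}$. Because $D$ is minimum it is in particular minimal, so $D\setminus\{w\}$ fails to dominate and every $w\in D$ possesses a private neighbor. I would then argue by contradiction: suppose $uv\in E(G[D])$. Since $u$ already has the neighbor $v\in D$, no private neighbor of $u$ can be $u$ itself, so each private neighbor of $u$ is an external vertex $u'\notin D$ with $N_G(u')\cap D=\{u\}$; symmetrically for $v$.

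The key swap is as follows. Suppose $u$ had a \emph{unique} private neighbor $u'$. I would set $D_u=(D\setminus\{u\})\cup\{u'\}$ and verify it is again a d-set of the same cardinality: the only vertices whose domination depended solely on $u$ are the private neighbors of $u$, i.e.\ just $u'$, and $u'$ now dominates itself, while $u$ is dominated by $v$. Since $N_G(u')\cap D=\{u\}$, the vertex $u'$ is isolated in $G[D_u]$, whereas deleting $u$ removes at least the edge $uv$ from $G[D]$; hence $G[D_u]$ has strictly fewer edges than $G[D]$, contradicting the choice of $D$. The same reasoning applies to $v$. Therefore both $u$ and $v$ must have at least two private neighbors.

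Finally I would assemble the forbidden subgraph. Let $a_1,a_2$ be distinct private neighbors of $u$ and $b_1,b_2$ distinct private neighbors of $v$. All four lie outside $D$, the $a_i$ are nonadjacent to $v$ and the $b_j$ nonadjacent to $u$ (since $u,v\in D$ while these vertices see only their own center in $D$), and $\{a_1,a_2\}\cap\{b_1,b_2\}=\emptyset$ because the $a_i$ meet $D$ only in $u$ and the $b_j$ only in $v$. Thus the six vertices $u,v,a_1,a_2,b_1,b_2$ are distinct, and with the edges $uv,ua_1,ua_2,vb_1,vb_2$ they carry a subgraph isomorphic to the double star $I$, whose two adjacent centers $u,v$ have degree at least three — contradicting $I\not\subseteq G$. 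The step I expect to demand the most care is the swap: verifying that replacing $u$ by its unique private neighbor genuinely preserves domination (so that $D_u$ is still a d-set) and strictly lowers the edge count, since it is exactly this dichotomy — unique private neighbor yields a smaller $G[D]$, otherwise two private neighbors on each side — that forces the appearance of $I$.
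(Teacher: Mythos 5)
Your argument is correct, and its skeleton is the same as the paper's: fix a minimum dominating set that is extremal with respect to the edges it induces, show that any edge inside it can be swapped away, and conclude that the extremal d-set must already be independent, which gives $i(G)\le\gamma(G)$. Where you diverge is in the invariant you track. The paper analyzes the \emph{degree} of an endpoint $x$ of an edge $xy$ of $X$: it eliminates $d_G(x)\le 2$ by replacing $x$ with its unique remaining neighbour $x'$ (or deleting $x$ outright when $x'$ is already dominated by another member of $X$), concludes $d_G(x)\ge 3$ and $d_G(y)\ge 3$, and then asserts $I\subseteq G$. You instead count \emph{external private neighbours} of $u$ and $v$, replacing $u$ by its private neighbour when that neighbour is unique, and conclude that each endpoint has at least two. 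This buys a genuinely cleaner endgame: from ``both endpoints have degree at least three'' one cannot immediately read off $I\subseteq G$, since the two extra neighbours of $x$ and the two extra neighbours of $y$ might coincide, leaving fewer than six distinct vertices; the paper's final sentence glosses over this. Private neighbours of $u$ are by definition disjoint from those of $v$, lie outside $D$, and are non-adjacent to the opposite centre, so your six vertices are automatically distinct and the edges $uv$, $ua_1$, $ua_2$, $vb_1$, $vb_2$ carry a genuine copy of the double star $I$. In short: same extremal swap, but your bookkeeping via private neighbours closes a distinctness gap that the paper's degree count leaves open, and your swap (the new vertex is isolated in the induced subgraph while at least the edge $uv$ disappears) also works verbatim whether one minimizes the number of induced edges, as you do, or takes $E(X)$ minimal under inclusion, as the paper does.
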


\begin{proof}
Suppose that $X$ is a d-set of $G$ with $E(X)$ minimal and nonempty. For two vertices $x, y \in X$ such that $xy \in E(G)$, it follows that $d_G(x) \geq 2$ and $d_G(y) \geq 2$, for otherwise, contrary to the minimality of $X$. Suppose $d_{G}(x) = 2$ or $d_G(y) = 2$. It suffices that $d_G(x) = 2$. Set $N_{G}(x) \setminus \{y\} = \{x'\}$. For all $z \in N_{G}(x') \setminus \{x\}$, if $z \notin X$, then $(X \setminus \{x\}) \cup \{x'\} = X'$ that is a d-set of $G$. Now, $||X|| - 1 \geq ||X'||$, contrary to the minimality of $E(X)$. If there exists $z \in N_{G}(x') \setminus \{x\}$ such that $z \in X$, then $X \setminus \{x\} = X''$ that is a d-set of $G$ and contrary to the minimality of $X$. Thus it follows that $d_G(x) \geq 3$ and $d_G(y) \geq 3$, and so $I \subseteq G$.
\end{proof}

\noindent A 3-connected cubic graph was conjectured to be one for which the difference between the independent domination number and the domination number is one; however, it was not true.

\begin{propa}[\cite{Zverovich}]
For every $c \in \{0, 1, 2, 3\}$ and every integer $k$ such that $k \geq 0$, there exist infinitely many cubic graphs with connectivity $c$ (say one as $G$) for which $i(G) - \gamma(G) = k$.
\end{propa}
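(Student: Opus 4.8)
The plan is to produce the required graphs by assembling standardized cubic \emph{gadgets} whose contributions to $\gamma$ and to $i$ are additive, so that the global gap $i(G)-\gamma(G)$ can be tuned to any prescribed value $k$. The engine of the construction is a cubic \emph{gap gadget} for which every minimum dominating set is forced to be non-independent, making $i$ strictly exceed $\gamma$ locally; the complete bipartite graph $K_{3,3}$ is a natural candidate, since $\gamma(K_{3,3})=2$ (two vertices from opposite sides suffice) while $i(K_{3,3})=3$ (a maximal independent set is exactly one side, as two independent vertices lie in a common side and leave its third vertex uncovered), giving a unit gap. Taking $k$ such gadgets is intended to contribute exactly $k$ to the difference.

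First I would fix a cubic \emph{neutral gadget} with $\gamma=i$, for instance $K_4$ or a member of an unbounded family of cubic graphs satisfying $\gamma=i$; appending copies of such a gadget increases $\gamma$ and $i$ by the same amount and therefore leaves the gap unchanged, which is exactly what yields \emph{infinitely many} examples for each target $k$. Next I would interconnect $k$ gap gadgets together with sufficiently many neutral gadgets into a single cubic graph. Because all vertices already have degree three, the joining cannot add edges freely; instead I would use degree-preserving edge swaps (delete an edge $uv$ from one piece and an edge $xy$ from another, then insert $ux$ and $vy$), chosen so that the local domination analysis of each gadget is preserved. This reduces the theorem to an \emph{additivity lemma}: the assembled graph $G$ satisfies $\gamma(G)=\sum_j \gamma(H_j)$ and $i(G)=\sum_j i(H_j)$, where $H_j$ ranges over the gadgets, whence $i(G)-\gamma(G)=k$.

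To hit each prescribed connectivity $c\in\{0,1,2,3\}$ I would vary only the interface between gadgets. For $c=0$ I would take a disjoint union (so the graph is disconnected); for $c=1$ I would arrange the edge swaps so that a single bridge separates the assembly, exhibiting a cut vertex; for $c=2$ and $c=3$ I would use, respectively, an interface admitting a $2$-element cut or a genuinely $3$-connected cubic frame into which the gap gadgets are spliced. In each case the argument has two halves: an upper bound on connectivity, given by displaying an explicit cut of size $c$, and a lower bound, verifying that no smaller cut exists.

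The hard part, and the place where the proof must be most careful, is the simultaneous control of all four requirements on the same family. The edge swaps used to connect the gadgets and to realize a particular connectivity perturb the neighborhoods at the interface, and a priori these perturbations could allow a dominating set to \emph{share} coverage across a boundary and beat the additive bound, or could permit a smaller independent dominating set, collapsing the gap. Thus the crux is the additivity lemma: I expect to prove it by an exchange argument showing that any dominating (resp.\ independent dominating) set of $G$ can be rerouted, without increasing its size, into one that decomposes as a disjoint union of dominating (resp.\ independent dominating) sets of the individual gadgets, so that the per-gadget optima add. Establishing this robustly across all four interface types, while keeping the graph cubic and the connectivity exactly $c$ rather than larger, is the main obstacle.
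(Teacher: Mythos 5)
The paper does not actually prove this proposition; it is imported verbatim from the cited Zverovich--Zverovich paper, so there is no in-paper argument to compare yours against. Judged on its own terms, your proposal is a plan rather than a proof, and the plan has a genuine gap at its load-bearing step: the ``additivity lemma'' $\gamma(G)=\sum_j\gamma(H_j)$ and $i(G)=\sum_j i(H_j)$ is asserted for gadgets joined by degree-preserving edge swaps, but it is false in that generality and you give no mechanism that would make it true. Deleting the edge $uv$ from a gadget can destroy every minimum dominating set of that gadget (a set that dominated $v$ only through $u$ no longer dominates), so $\gamma(G)\le\sum_j\gamma(H_j)$ is not automatic; conversely, a vertex incident to a new interface edge dominates vertices in two gadgets at once, so $\gamma(G)\ge\sum_j\gamma(H_j)$ can also fail. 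For $i$ the situation is worse: the trace of a maximal independent set of $G$ on a gadget need not be maximal independent in that gadget, and $i$ is not even monotone under the operations you describe. Everything except the $c=0$ case (disjoint unions, where additivity over components is genuine, and $k$ copies of $K_{3,3}$ together with arbitrarily many copies of $K_4$ really do give $i-\gamma=k$) rests on this unproved and generally false lemma.

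The connectivity control is also underspecified in a way that interacts badly with the additivity you need. For cubic graphs vertex connectivity equals edge connectivity, so a gadget attached to the rest of the graph by one or two edges forces connectivity at most $1$ or $2$; to realize $c=3$ every gadget must meet the frame in at least three independent edges, which is exactly the regime where cross-boundary domination is hardest to exclude. You would further need, for $c=3$ and $k=0$, an infinite family of $3$-connected cubic graphs with $i=\gamma$, which you invoke only as ``a member of an unbounded family'' without exhibiting one. Your instincts are sound --- $K_{3,3}$ does satisfy $\gamma=2$ and $i=3$, and gadget assembly is indeed how Zverovich and Zverovich proceed --- but the actual content of their theorem is precisely the part you defer: explicit gadgets with prescribed attachment edges whose domination and independent domination behaviour after attachment is verified directly, case by case, rather than deduced from a general additivity principle.
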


\noindent The next statement was suggested and has been widely discussed. 

\begin{conja}[\cite{Reed}]
Every connected cubic graph $G$ contains a dominating set of at most $\lceil |V(G)|/3 \rceil$ vertices.
\end{conja}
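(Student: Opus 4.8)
The plan is to combine a cheap Hamiltonian argument for the generic case with a charging argument, routed through the edge-minimal $d$-set reduction of Proposition~B, for the rest. First, if $G$ has a Hamilton cycle $C$, then selecting every third vertex along $C$ yields a dominating set of $C$ of size $\lceil |V(G)|/3 \rceil$; since $C$ is spanning, this set also dominates $G$, and the bound holds at once. The same works from a Hamilton path. As Reed observed that almost all cubic graphs are Hamiltonian, this disposes of the generic case, and the whole difficulty is concentrated in the non-Hamiltonian cubic graphs, where no long spanning path is available for free.

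For those, I would argue by a minimal counterexample: let $G$ be a connected cubic graph with $\gamma(G) > \lceil |V(G)|/3 \rceil$ on fewest vertices, and fix a $d$-set $X$ with $E(X)$ minimal. Because $G$ is cubic, any edge $xy$ of $X$ has $d_G(x) = d_G(y) = 3$, so the substitutions in the proof of Proposition~B do not terminate for free; instead I would use local exchanges that replace an endpoint of such an edge by a neighbour in $V(G) \setminus X$, lowering $E(X)$ without increasing $|X|$, until $X$ is \emph{independent}. This is exactly the dichotomy already available (a minimum counterexample has an independent $d$-set), now used only as an intermediate step. With $X$ independent and $Y = V(G) \setminus X$, cubicity gives precisely $3|X|$ edges between $X$ and $Y$, and domination forces $|N_G(y) \cap X| \ge 1$ for every $y \in Y$. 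I then assign charge $1$ to each $x \in X$ and split the charge $1$ of each $y \in Y$ equally among its neighbours in $X$, so the total charge is $|X| + |Y| = |V(G)|$; proving that every $x$ collects charge at least $3$ would give $3|X| \le |V(G)|$, i.e.\ $|X| \le \lceil |V(G)|/3 \rceil$, the desired contradiction.

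The main obstacle is exactly the vertices $x \in X$ of low charge, namely those whose neighbours in $Y$ are all shared with other members of $X$: such an $x$ may collect as little as $1 + 3 \cdot \frac{1}{3} = 2$. I would try to eliminate them by a second exchange: if $x$ has no \emph{private} neighbour then every neighbour of $x$ is dominated twice, so $x$ can be removed and the unique vertex it might leave undominated repaired by a single substitution, contradicting minimality of $|X|$. The delicate part is that these repairs interact --- a substitution can recreate an edge inside $X$ or a new double star $I$ --- so the local moves must be shown to converge without raising $|X|$. Controlling these tightly overlapping configurations, in which the average domination region falls below three vertices, is the crux; it is precisely here that the cubic graphs exceeding the bound reside, so a successful argument must extract additional global structure (for instance a spanning linear forest with few, long components) to supplement the purely local charging.
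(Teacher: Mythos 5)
The statement you have set out to prove is not one of the paper's results: it is Conjecture~A, quoted from Reed, and the paper explicitly records that it is \emph{false} as stated. Counterexamples among connected cubic graphs of connectivity one and two are cited (the graph $H_4$ of Kostochka--Stodolsky, and the families in Kelmans and Stodolsky, e.g.\ a connected cubic graph on $60$ vertices with domination number $21 > \lceil 60/3 \rceil = 20$). Accordingly the paper proves only a conditional statement, Theorem~\ref{T1}: \emph{if} $\gamma(G) > \lceil |V(G)|/3 \rceil$, then $\gamma(G) = i(G)$. So no completion of your argument can be correct; whatever is written down must break on the known counterexamples, and it is worth locating exactly where.

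The break is visible in both halves of your plan. The Hamiltonian reduction covers only ``almost all'' cubic graphs, and the extremal graphs live in the残 remainder, so nothing is disposed of for the purpose of a universal bound. In the charging argument, once $X$ is independent each $x \in X$ is guaranteed only $1 + 3 \cdot \frac{1}{3} = 2$, and the inequality $3|X| \le |V(G)|$ you aim for requires every $x$ to collect charge exactly $3$, i.e.\ \emph{every} neighbour of \emph{every} $x$ to be a private neighbour. That is equivalent to the closed neighbourhoods $N_G[x]$, $x \in X$, partitioning $V(G)$ (a perfect/efficient dominating set), which most cubic graphs --- including all the counterexamples --- do not admit; $\gamma(G) = |V(G)|/4$ would then follow, far below even the conjectured bound, which signals the requirement is unattainable rather than merely delicate. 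Your proposed repair also fails on its own terms: if $x \in X$ has no private neighbour and $X$ is independent, then $x$ is dominated only by itself, so deleting $x$ leaves $x$ itself (not ``a unique neighbour'') uncovered, and any substitution restoring coverage returns $|X|$ to its previous size. Your intermediate dichotomy --- that a counterexample can be assumed to have an independent d-set --- is in substance the paper's actual Theorem~\ref{T1}, which it proves via an edge-deletion and subdivision machinery (the sets $U_G(X)$ and $T_G(X)$, reduction of $G$ to a union of dominated paths and cycles, and induction on $|Y| - |X'|$); but the paper deliberately stops there, offering $\gamma = i$ as a \emph{sufficient condition} for the conjectured bound, not a proof of the conjecture, which cannot exist.
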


\noindent There are some counterexamples in cubic graphs with connectivity one and two. For example, we can observe on the graph $H_4$ in \cite{Kostochka}. \\

\section{Sufficient condition for Reed's conjecture}
\label{sec:3}
 
It is central for this proof how edges and vertices are deleted from a cubic graph to preserve its dominating set. By deleting a vertex of $G$, some path may be broken.
A substitution is needed to connect them and preserve an original set of vertices of the dominating set. 

\begin{lem}\label{disjoint}
Let a graph $G$ with $\Delta(G) \leq 3$ have a d-set $X$ with $|X| \geq 3$ and with $E(X)$ minimal and nonempty. Then for each $v_1, v_2, w \in X$ such that $v_1v_2 \in E(X)$, we have  $N_G[\{v_1, v_2\}] \cap N_G[w] = \emptyset$.
\end{lem}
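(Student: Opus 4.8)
The plan is to argue by contradiction, exploiting the two minimality hypotheses on $X$ exactly as in the proof of Proposition~B. First I would record that, since $v_1v_2 \in E(X)$ and $E(X)$ is minimal and nonempty, the argument of Proposition~B forces $d_G(v_1) \geq 3$ and $d_G(v_2) \geq 3$; as $\Delta(G) \leq 3$ this gives $d_G(v_1) = d_G(v_2) = 3$. Write $N_G(v_1) = \{v_2, a_1, a_2\}$, and note $w \notin \{v_1, v_2\}$. Suppose for contradiction that some $z \in N_G[\{v_1,v_2\}] \cap N_G[w]$. By the symmetry $v_1 \leftrightarrow v_2$ I may assume $z \in N_G[v_1]$. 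The goal is to show that this overlap lets me delete $v_1$ from $X$ so that at most one vertex loses all of its dominators, and then to repair that single defect by a substitution that strictly decreases $\|X\|$.

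The core observation is a dichotomy on $z$. If $w$ is adjacent to $v_1$ (the case $z = v_1$, and the case $z = w \in N_G(v_1)$), then $w$ is one of $a_1, a_2$, say $w = a_1$; since $w \in X$ it remains self-dominated after deleting $v_1$. Otherwise $z \neq w$ is a common neighbour of $v_1$ and $w$, so $z \in \{a_1,a_2\}$, say $z = a_1$, and then $a_1 \in N_G(w)$ is dominated by $w \in X$. The cases where the witness lies in $N_G[v_2]$ reduce to these after swapping $v_1$ and $v_2$. In either case, after forming $X_0 = X \setminus \{v_1\}$, the vertex $v_1$ is dominated by $v_2$, the vertex $a_1$ is dominated by $w$, and $v_2, w \in X_0$ are self-dominated; hence the only vertex that can fail to be dominated by $X_0$ is $a_2$.

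It remains to derive the contradiction. If $a_2$ is also dominated by $X_0$, then $X_0$ is a dominating set with $|X_0| = |X| - 1$, contradicting that $X$ is a d-set. So $N_G[a_2] \cap X = \{v_1\}$; in particular $a_2 \notin X$ and $a_2$ has no neighbour in $X \setminus \{v_1\}$. Now set $X_1 = (X \setminus \{v_1\}) \cup \{a_2\}$. Then $X_1$ dominates $a_2$ (by itself) and everything else as $X_0$ did, so $X_1$ is a d-set with $|X_1| = |X|$. Passing from $X$ to $X_1$ deletes all edges of $E(X)$ incident with $v_1$ --- which include $v_1v_2$, hence at least one edge --- and adds no edge, since $a_2$ has no neighbour in $X_1$. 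Thus $\|X_1\| < \|X\|$, contradicting the minimality of $E(X)$, and the lemma follows.

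The step I expect to be the crux is the dichotomy guaranteeing a single defect: one must check that the overlap hypothesis always places one of the two outer neighbours of the deleted endpoint inside $N_G[w] \cap X$, so that exactly one outer neighbour, here $a_2$, can be left undominated. This requires a careful (if routine) enumeration of the position of the witness $z$ and of the possible coincidences among $v_2, a_1, a_2, w$, together with the verification that the substitution $v_1 \mapsto a_2$ genuinely introduces no new edge into $G[X_1]$.
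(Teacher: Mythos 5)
Your proposal is correct, and it runs on exactly the same two levers as the paper's own argument: first forcing $d_G(v_1)=d_G(v_2)=3$ from the minimality of $X$ and of $E(X)$, and then observing that a witness of intersection leaves at most one outer neighbour ($a_2$) undominated after deleting $v_1$, so that either $X\setminus\{v_1\}$ already dominates (contradicting $|X|=\gamma(G)$) or the swap $v_1\mapsto a_2$ strictly shrinks $E(X)$ (contradicting edge-minimality). The paper phrases this directly --- proving that every outer neighbour $u$ of an endpoint of an $E(X)$-edge satisfies $N_G[u]\cap X=\{$that endpoint$\}$, from which disjointness is immediate --- while you argue by contradiction from a witness $z$, but the underlying case analysis and both contradictions are identical.
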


\begin{proof}
For some $x_1, x_2 \in X$, let $x_1x_2 \in E(X)$. If $|N_G(x_1) \setminus \{x_2\}| = 0$, then $X \setminus \{x_1\}$ is a d-set of $G$, contrary to the minimality of $X$. Let $|N_G(x_1) \setminus \{x_2\}| = 1$ and set $N_G(x_1) \setminus \{x_2\} = \{v_1\}$. For some $v_2 \in N_G(v_1) \setminus \{x_1\}$, suppose $v_1 \in X$ or $v_2 \in X$. Now, $X \setminus \{x_1\}$ is a d-set of $G$, contrary to the minimality of $X$. For all $v_2 \in N_G(v_1) \setminus \{x_1\}$, suppose $v_1 \notin X$ and $v_2 \notin X$. Now, $(X \setminus \{x_1\}) \cup \{v_1\}$ is a d-set of $G$, contrary to the minimality of $E(X)$. Thus $|N_G(x_1) \setminus \{x_2\}| = 2$ and set $N_G(x_1) \setminus \{x_2\} = \{v_1, w_1\}$. For some $v_2 \in N_G(v_1) \setminus \{x_1\}$, suppose $v_1 \in X$ or $v_2 \in X$. Now, if $w_2 \in N_G(w_1) \setminus \{x_1\}$, then $w_1 \notin X$ and $w_2 \notin X$, for otherwise, $X \setminus \{x_1\}$ is a d-set of $G$, contrary to the minimality of $X$. Thus $(X \setminus \{x_1\}) \cup \{w_1\}$ is a d-set of $G$, contrary to the minimality of $E(X)$. Therefore, if $v_2 \in N_G(v_1) \setminus \{x_1\}$, then $v_1 \notin X$ and $v_2 \notin X$ as required.
\end{proof}

\begin{defn}
For a graph $G$, two vertices $v_1, v_2 \in V(G)$ such that $v_1v_2 \in E(G)$, and a set $X \subseteq V(G)$, suppose that (i), (ii), or (iii) holds. \\
(i) $v_1, v_2 \not \in X$ \\
(ii) $v_1 \in X$, $v_2 \notin X$, $(N_G(v_2) \setminus \{v_1\}) \cap X \ne \emptyset$ \\
(iii) $v_1, v_2 \in X$ \\
Then the set of all $v_1v_2$ is denoted by $U_G(X)$, or $U(X)$.
\end{defn}

\begin{fact}\label{U'}
For a graph $G$ and its d-set $X$, let $U' \subseteq U_G(X)$. Then $X$ is a d-set of $G - U'$. 
\end{fact}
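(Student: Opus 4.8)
The plan is to split the claim into two parts: that $X$ remains a \emph{dominating} set of $G - U'$, and that it remains \emph{minimum}. The second part is essentially free. Since $G - U'$ is a spanning subgraph of $G$, every dominating set of $G - U'$ is also a dominating set of $G$ (removing edges only shrinks closed neighborhoods, so $N_{G-U'}[S] \subseteq N_G[S]$), whence $\gamma(G) \le \gamma(G - U')$. Thus if $X$ dominates $G - U'$ we obtain $|X| \ge \gamma(G-U') \ge \gamma(G) = |X|$, forcing $|X| = \gamma(G - U')$, so $X$ is automatically a d-set of $G-U'$. Everything therefore reduces to showing $N_{G-U'}[X] = V(G)$, i.e. that no vertex loses its domination when the edges of $U'$ are deleted.

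To verify this I would argue by the type of each deleted edge, since a vertex can only become undominated through the loss of an edge joining it to $X$. First I would dispose of the harmless types. For an edge $v_1v_2$ of type (i), neither endpoint lies in $X$, so this edge never contributes to dominating any vertex and its removal changes no domination status; the same holds for an edge of type (iii), since both endpoints lie in $X$ and are self-dominated. Hence removing all the type-(i) and type-(iii) edges of $U'$ simultaneously cannot undominate any vertex, and one is left to control only the type-(ii) edges.

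The crux is case (ii). Here a removed edge $v_1v_2$ has $v_1 \in X$ and $v_2 \notin X$, so its deletion threatens the domination of $v_2$; the defining condition supplies a neighbor $w \in (N_G(v_2)\setminus\{v_1\}) \cap X$ meant to take over. The delicate point is that the whole set $U'$ is deleted at once, and the witnessing edge $v_2 w$ may itself belong to $U'$ (indeed $v_2 w$ also satisfies (ii), now with $v_1$ serving as the witness). I expect this interaction of several type-(ii) edges sharing the non-$X$ endpoint $v_2$ to be the main obstacle: one must argue that the $X$-neighbors of a vertex $v_2 \notin X$ cannot all be disconnected simultaneously, so that at least one edge $v_2 w$ with $w \in X$ survives in $G - U'$. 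Establishing this—or else restricting $U'$ so that the type-(ii) edges incident to a common non-$X$ vertex are not all chosen—is precisely the step requiring the most care, and it is where I would concentrate the verification.
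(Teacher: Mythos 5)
Your reduction is sound as far as it goes: the minimality of $X$ in $G-U'$ is indeed free once domination is established (since $G-U'$ is a spanning subgraph of $G$, every dominating set of $G-U'$ dominates $G$, so $\gamma(G-U')\geq\gamma(G)$), and edges of types (i) and (iii) are irrelevant to domination exactly as you argue. You have also put your finger on the right difficulty in case (ii) --- but that difficulty cannot be overcome, because the statement as literally written is false, for precisely the reason you suspect. Take $G=C_4$ with vertex cycle $u_1u_2u_3u_4u_1$ and the d-set $X=\{u_1,u_3\}$. The edge $u_1u_2$ satisfies (ii) with witness $u_3$, and $u_2u_3$ satisfies (ii) with witness $u_1$, so $U'=\{u_1u_2,\,u_2u_3\}\subseteq U_G(X)$; yet in $G-U'$ the vertex $u_2\notin X$ is isolated and undominated. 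So no argument can show that ``at least one edge $v_2w$ with $w\in X$ survives'': the definition of $U_G(X)$ simply does not guarantee it when several type-(ii) edges at a common non-$X$ vertex are deleted simultaneously.

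The paper states this Fact without proof, so there is no argument to compare against; but observe that in both places where it is invoked (the proof of Theorem 2.1 and of Claim 2.1) the author removes a single edge $e\in U_{G_i}(X)$ at a time and recomputes $U_{G_{i+1}}(X)$ before the next deletion. For that iterated single-edge version your case analysis already constitutes a complete and correct proof: deleting one type-(ii) edge $v_1v_2$ leaves the witness edge $v_2w$ intact, so $v_2$ remains dominated, and one concludes by induction on the number of deletions. The honest repair is therefore either to restate the Fact for a single edge (or for a sequence of deletions with $U$ recomputed at each step), or to add the hypothesis that for every $v_2\notin X$ the set $U'$ retains at least one edge from $v_2$ to $N_G(v_2)\cap X$ --- which is exactly the restriction you propose at the end of your write-up.
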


\begin{defn}
For a graph $G$ and a set $Y \subseteq V(G)$, suppose that $t_1 \in Y$ has a set of vertices $B(t_1)$ such that for all $b \in B(t_1)$, $b \in N_G(t_1)$ and $(N_G[b] \setminus \{t_1\}) \cap Y = \emptyset$. Take $B(t_1)$ to be maximal. Then $\bigcup_{t_1 \in Y}B(t_1)$ is denoted by $T_G(Y)$, or $T(Y)$. 
\end{defn}

\begin{defn}
For a graph $G$, a set $Y \subseteq V(G)$, and a vertex $v_1 \in T_G(Y)$, let $t_1 \in N_G(v_1) \cap Y$. Delete the edge $v_1t_1$. For each $t_2 \in N_G(v_1) \setminus \{t_1\}$, subdivide the edge $v_1t_2$ by a new vertex $w_2$ respectively. For the set of all $v_1$ applied this replacement, say $S$, the resulting graph is denoted by $G(S)$.
\end{defn}

\begin{fact}\label{T'0}
For a graph $G$, a set $Y \subseteq V(G)$, and a set $T' \subseteq T_G(Y)$, if $Y$ is a d-set of $G$, then $Y \cup T'$ is a dominating set of $G(T')$. 
\end{fact}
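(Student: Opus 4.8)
The plan is to verify directly that every vertex of $G(T')$ lies in the closed neighborhood, taken in $G(T')$, of some vertex of $Y \cup T'$. First I would record two structural facts drawn from the definition of $T_G(Y)$: for every $v_1 \in T' \subseteq T_G(Y)$ the condition $(N_G[v_1] \setminus \{t_1\}) \cap Y = \emptyset$ forces $v_1 \notin Y$, and forces each neighbor $t_2 \in N_G(v_1) \setminus \{t_1\}$ to satisfy $t_2 \notin Y$. In particular $t_1$ is the unique neighbor of $v_1$ in $Y$, which is exactly the vertex whose edge is removed in the construction of $G(T')$. Consequently $Y$ and $T'$ are disjoint, and the vertex set of $G(T')$ partitions into $Y$, $T'$, the remaining old vertices $V(G) \setminus (Y \cup T')$, and the newly inserted subdivision vertices $w_2$.

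Next I would dispose of the easy cases. Every vertex of $Y \cup T'$ dominates itself, so all vertices of $Y$ and of $T'$ are covered. Each new subdivision vertex $w_2$ was placed on an edge $v_1t_2$ with $v_1 \in T'$, and in $G(T')$ it is adjacent to $v_1$; since $v_1 \in T' \subseteq Y \cup T'$, the vertex $w_2$ is dominated.

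The heart of the argument is the case $u \in V(G) \setminus (Y \cup T')$. Because $Y$ is a d-set of $G$ and $u \notin Y$, there is a vertex $y \in Y$ with $uy \in E(G)$, and I would show that this edge survives in $G(T')$, so that $u$ remains dominated by $y$. The edges altered in passing from $G$ to $G(T')$ are of two kinds: the deleted edges $v_1t_1$ (with $v_1 \in T'$ and $t_1 \in Y$) and the subdivided edges $v_1t_2$ (with $v_1 \in T'$ and, by the structural observation, $t_2 \notin Y$). The edge $uy$ has one endpoint $y$ in $Y$ and one endpoint $u$ outside $Y$. A subdivided edge has both endpoints outside $Y$, so $uy$ is not subdivided; a deleted edge has its non-$Y$ endpoint in $T'$, and $u \notin T'$, so $uy$ is not deleted. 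Hence $uy \in E(G(T'))$ and $u \in N_{G(T')}[y] \subseteq N_{G(T')}[Y \cup T']$.

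I expect the only delicate point to be the bookkeeping in this final case, namely confirming that the witnessing edge $uy$ can never coincide with an altered edge. This reduces entirely to the two membership facts $u \notin T'$ and, for the subdivided edges, that both endpoints lie outside $Y$; both come directly from the definition of $T_G(Y)$. Once these are in place the verification is immediate and the claim follows.
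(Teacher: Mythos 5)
Your proof is correct. The paper states this result as a bare \emph{Fact} with no proof at all, so there is nothing to compare against; your direct case analysis --- using that the definition of $T_G(Y)$ forces each $v_1 \in T'$ and all its neighbors other than $t_1$ to lie outside $Y$, so that no edge from $Y$ to $V(G)\setminus(Y\cup T')$ is ever deleted or subdivided --- is exactly the verification the author leaves implicit, and it is complete.
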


\begin{fact}\label{T'}
For a graph $G$, a set $Y \subseteq V(G)$, and a set $T' \subseteq T_G(Y)$, if $Y$ is a d-set of $G - T'$, then $Y \cup T'$ is a d-set of $G(T')$. 
\end{fact}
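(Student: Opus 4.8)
\noindent The plan is to prove two separate things about $Y \cup T'$: that it dominates $G(T')$, and that it has minimum size. Note first that $Y \cap T' = \emptyset$, since every vertex of $T_G(Y)$ lies outside $Y$, so $|Y \cup T'| = |Y| + |T'| = \gamma(G - T') + |T'|$. For domination I would argue as for Fact~\ref{T'0}, but now drawing only on the weaker hypothesis: each subdivision vertex $w_2$ is dominated by its center $v_1 \in T'$; each $v_1$ and each $y \in Y$ dominates itself; and any remaining original vertex $u \notin Y \cup T'$ is already dominated by $Y$ in $G - T'$ through some edge $uy$ with $y \in Y$. Since neither $u$ nor $y$ lies in $T'$, that edge is neither deleted nor subdivided, so it survives into $G(T')$ and still dominates $u$.

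\noindent For minimality it then suffices to show $\gamma(G(T')) \ge \gamma(G - T') + |T'|$. Given an arbitrary dominating set $D$ of $G(T')$, I would manufacture a dominating set $D'$ of $G - T'$ by discarding the gadget interiors and compensating: set $D' = (D \cap V(G - T')) \cup \{ t_2 : w_2 \in D \}$, i.e. replace each subdivision vertex of $D$ by the foot $t_2$ it sits beside. This $D'$ dominates $G - T'$, because a center $v_1$ dominates no vertex of $G - T'$ at all (its closed neighbourhood in $G(T')$ consists of $v_1$ together with its subdivision vertices), while the only vertex of $G - T'$ that a subdivision vertex $w_2$ can dominate is exactly $t_2$. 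The intended gain of $|T'|$ should come from deleting the $|T'|$ centers: the closed neighbourhoods $N_{G(T')}[v_1]$ over $v_1 \in T'$ are pairwise disjoint by construction and each must meet $D$, so the centers account for at least $|T'|$ members of $D$ that contribute nothing to $D'$.

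\noindent The step I expect to be the real obstacle is making that gain of $|T'|$ honest, and it is precisely here that the hypotheses must be used. The difficulty is that a single subdivision vertex $w_2 \in D$ can dominate its center $v_1$ and its foot $t_2$ at the same time, so a minimum dominating set of $G(T')$ may legitimately omit $v_1$, and for such a gadget the construction above saves nothing. What I would have to prove is that $G(T')$ nonetheless admits a minimum dominating set containing every center --- equivalently, that no such ``shortcut'' through a subdivision vertex beats $|Y| + |T'|$. This is where the assumptions that $Y$ is a \emph{minimum} dominating set of $G - T'$ and that the deleted edge joins $v_1$ to $t_1 \in Y$, with $t_1$ distinct from every foot, should become essential: every foot $t_2 \notin Y$ still has an alternative dominator inside $Y$, and I would try to use this to rotate each $w_2$ out of $D$ in favour of its center $v_1$ without increasing $|D|$, by an exchange argument anchored at these $Y$-neighbours of the feet. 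Carrying out this exchange, together with the bookkeeping when two centers are adjacent so that one edge is subdivided from both ends, is where I anticipate essentially all of the work to lie.
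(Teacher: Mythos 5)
The paper states Fact~\ref{T'} without proof, so there is no argument of the author's to compare yours to; the only question is whether your plan can be completed, and it cannot. Your first half (domination) is correct, and your diagnosis of where the difficulty lies is exactly right, but the exchange you defer --- rotating each subdivision vertex $w_2 \in D$ out of $D$ in favour of its center $v_1$ without increasing $|D|$ --- is not merely the hard part, it is impossible in general, because Fact~\ref{T'} as stated is false. Take $G$ to be the path $t_1v_1t_2u_1u_2u_3$ and $Y = \{t_1, u_1, u_3\}$. Then $v_1 \in T_G(Y)$ (its only $Y$-neighbour is $t_1$, and $(N_G[v_1]\setminus\{t_1\}) \cap Y = \{v_1,t_2\}\cap Y = \emptyset$), so we may take $T' = \{v_1\}$. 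The graph $G - T'$ is the isolated vertex $t_1$ together with the path $t_2u_1u_2u_3$, so $\gamma(G-T') = 1+2 = 3$ and $Y$ is indeed a d-set of $G-T'$. But $G(T')$ is the isolated vertex $t_1$ together with the path $v_1w_2t_2u_1u_2u_3$, which is dominated by $\{t_1, w_2, u_2\}$; hence $\gamma(G(T')) = 3 < 4 = |Y \cup T'|$, and $Y \cup T'$ is a dominating set but not a d-set of $G(T')$.

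This is precisely the ``shortcut'' you flagged: $w_2$ dominates its center $v_1$ and its foot $t_2$ simultaneously, and here $t_2$ is a vertex whose domination in $G-T'$ costs an extra vertex of $Y$ ($u_1$ is needed essentially only for $t_2$, since $u_2$ alone dominates $u_1, u_2, u_3$). The hypotheses you hoped would rescue the exchange do not suffice: minimality of $Y$ in $G-T'$ guarantees that every foot $t_2 \notin Y$ has a $Y$-dominator, but it does not prevent that dominator from becoming dispensable once the foot is covered from inside the gadget. So the inequality $\gamma(G(T')) \ge \gamma(G-T') + |T'|$ that your second half needs simply fails, and no proof of Fact~\ref{T'} as stated can exist; only the weaker Fact~\ref{T'0}, which your first paragraph correctly establishes, survives. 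Any repair would require additional hypotheses forcing each foot to be dominated by a vertex of $Y$ that is indispensable for other reasons, and this defect propagates to the proof of Theorem~\ref{T1}, which invokes Fact~\ref{T'}.
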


\begin{thm}\label{T1}
For a connected cubic graph $G$, if $\gamma(G) > \lceil |V(G)|/3 \rceil$, then $\gamma(G) = i(G)$.
\end{thm}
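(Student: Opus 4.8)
The plan is to prove the contrapositive: assuming $\gamma(G) \neq i(G)$, I will derive $\gamma(G) \leq \lceil |V(G)|/3 \rceil$. Since $\gamma(G) \leq i(G)$ for every graph, the hypothesis forces $\gamma(G) < i(G)$; equivalently, \emph{no} minimum dominating set of $G$ is independent. As $G$ is connected and cubic we have $|V(G)| \geq 4$, so $\lceil |V(G)|/3 \rceil \geq 2$ and the bound is immediate when $\gamma(G) \leq 2$; hence I may assume $\gamma(G) \geq 3$. Among all minimum dominating sets I fix one, $X$, with $E(X)$ minimal. Because none is independent, $E(X) \neq \emptyset$, so $X$ meets the hypotheses of Lemma \ref{disjoint}.

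First I would read off the local structure. If two edges of $E(X)$ shared an endpoint $v_2$, then $v_2$ would lie in both $N_G[\{v_1,v_2\}]$ and $N_G[v_3]$, contradicting Lemma \ref{disjoint}; thus $E(X)$ is a matching, and I write $X$ as $p \geq 1$ matched pairs together with a set $W$ of $q$ vertices isolated in $G[X]$, so $|X| = 2p + q$. By the same lemma the closed neighborhoods $N_G[\{v_1,v_2\}]$ of the pairs are pairwise disjoint and disjoint from $N_G[w]$ for every other $w \in X$; since $X$ dominates $G$, these ``pair blocks'' together with $N_G[W]$ partition $V(G)$. In a cubic graph a pair block (write $P$ for the pair) has $|N_G[P]| \leq 6$ and is served by exactly $2$ vertices of $X$, while an isolated vertex serves a block of at most $4$ vertices by itself.

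The target inequality amounts to an average of three dominated vertices per element of $X$. A \emph{full} pair block --- six vertices, with no triangle on the edge of $E(X)$ and no neighbor shared by its two centers --- meets this ratio exactly ($6/2$), and an isolated vertex with three privately used neighbors exceeds it ($4/1$); in that regime the partition gives $|V(G)| = \sum_P |N_G[P]| + |N_G[W]| \geq 3|X|$ and hence $|X| \leq \lceil |V(G)|/3 \rceil$ at once. The ratio can fall below three only through a \emph{deficient} block: a pair whose two centers share a neighbor (so $|N_G[P]| \leq 5$), or isolated vertices with heavily overlapping neighborhoods (for instance a $K_{2,3}$ of twins). This is exactly where the machinery is meant to act. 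Deleting the matching edges of $E(X)$, which are type-(iii) members of $U_G(X)$, preserves domination by Fact \ref{U'} and makes the pairs independent; for a deficient block I would move its privately dominated neighbors into $T_G(X)$ and apply the operation $G(S)$, so that by Facts \ref{T'0} and \ref{T'} the freshly created subdivision vertices reinstate a minimum dominating set that is independent around that block in $G(S)$. The intended conclusion is that a block can be deficient only at the price of an independent reconfiguration, so that under $\gamma(G) \neq i(G)$ no deficient block survives; every block then carries ratio at least three and the partition count delivers $\gamma(G) = |X| \leq \lceil |V(G)|/3 \rceil$.

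The main obstacle is the repair step and, above all, transferring its output back to $G$. I must check that each deficient block actually supplies the private neighbors needed to populate $T_G(X)$, and that applying $G(S)$ neither undominates a vertex of an adjacent block nor recreates an edge inside the set --- the role of Facts \ref{T'0}--\ref{T'} --- while coordinating the edge deletions that break the pairs with the subdivisions that absorb the shared neighbors across all blocks \emph{at once}. Crucially, $G(S)$ alters the host graph, so the reconfiguration is a priori a statement about $G(S)$ rather than $G$; the delicate part is to pull it back, either to exhibit an independent minimum dominating set of $G$ itself (contradicting $\gamma(G) \neq i(G)$) or to propagate the vertex-count bound from $G(S)$ to $G$. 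Disposing of the overlapping isolated neighborhoods by the same local rerouting, and verifying the ceiling arithmetic in the extremal configuration $q = 0$ (where $|V(G)| = 6p$ and $|X| = 2p$ meet with no slack), are the remaining points to pin down.
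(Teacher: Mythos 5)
Your first two paragraphs are sound: with $\gamma(G)\geq 3$ and $X$ a minimum dominating set with $E(X)$ minimal and nonempty, Lemma \ref{disjoint} does make $E(X)$ a matching whose closed pair-neighborhoods are pairwise disjoint and disjoint from $N_G[w]$ for every unmatched $w\in X$, so $V(G)$ splits into the pair blocks and $N_G[W]$, and the bound follows whenever every pair block has six vertices and $|N_G[W]|\geq 3|W|$. (Incidentally, the ``triangle on a matched pair'' deficiency you worry about is already excluded: the proof of Lemma \ref{disjoint} shows that for an endpoint $x_1$ of a matched edge, no neighbor of $N_G(x_1)\setminus\{x_2\}$ other than $x_1$ lies in $X$, and a common neighbor of $x_1$ and $x_2$ would violate this with $x_2\in X$.) The genuine gap is everything after that. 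For the unmatched vertices the free estimate is only $|N_G[W]|\geq 2|W|$, since each of the $3|W|$ edges leaving $W$ may be absorbed by a vertex adjacent to up to three members of $W$; configurations like $K_{2,3}$-twins give ratio $5/2$ and nothing in the minimality of $X$ or of $E(X)$ forbids them. Your proposed repair does not touch this case: $T_G(X)$ is by definition made of vertices $b$ with $(N_G[b]\setminus\{t_1\})\cap X=\emptyset$, i.e.\ privately dominated neighbors, which is the opposite of a neighbor shared by two dominators. And, as you concede yourself, even where the repair applies it yields a dominating set of the modified graph $G(S)$, whose vertex set has grown by the subdivision vertices; you give no argument that pulls back either an \emph{independent minimum} dominating set to $G$ (to contradict $\gamma(G)\neq i(G)$) or a vertex count to the bound $|X|\leq\lceil |V(G)|/3\rceil$ in $G$. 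That implication --- ``a deficient block forces an independent reconfiguration of $G$ itself'' --- is asserted as the intended conclusion but never proved, and it is precisely the content of the theorem: the known counterexamples to Reed's conjecture have $|V(G)|<3\gamma(G)$, so for them your counting \emph{must} fail and the entire burden falls on the unexecuted step.

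For contrast, the paper does not argue by block counting at all. It deletes all edges of $U_G(X)$ and subdivides around a subset of $T_G(X)$ so as to reduce $G$ to a disjoint union $G''$ of paths and cycles carrying a transported dominating set $Y$, and then inducts on the defect $|Y|-|X'|$ between $Y$ and a minimum dominating set $X'$ of $G''$, invoking Lemma \ref{disjoint} only in the base case. Your partition-and-average framing is cleaner in the non-deficient regime, but the deficient configurations are exactly where the paper's induction is meant to do the work, and your proposal leaves that work undone.
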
 

\begin{proof}
Let $G$ be a connected cubic graph and $X$ be its d-set. Suppose that $E(X)$ is minimal. Suppose $||X|| > 0$. Now, there exists a path $P$ in $G$ such that $P = a_1a_2$ for some $a_1, a_2 \in X$. Now we construct a graph $G''$ from $G$. Let $G_0 = G$. Let $i$ be an integer such that $i \geq 0$. For some $e \in U_{G_i}(X)$, let $G_{i + 1} = G_i - e$. For some $j$ such that $j \geq i$, let $G' = G_{j + 1}$. By Fact \ref{U'}, $X$ is a d-set of $G'$. For some $T' \subseteq T_{G'}(X)$, let $G'' = G'(T')$. Let $Y = X \cup T'$. By Fact \ref{T'0}, $Y$ is a dominating set of $G''$. Let $X'$ be a d-set of $G''$. Let $A$ be a path (or a cycle) component of $G''$ such that $N_A[Y(A)] = V(A)$. Let $\mathcal{A}$ be the set of all $A$. 

\begin{claim}\label{gaaa}
It is possible to take $G''$ as $G'' = \bigcup_{A \in \mathcal{A}} A$.
\end{claim}

\begin{proof}
Let $G_0 = G$. Let $i$ be an integer such that $i \geq 0$. For some $v \in V(G_i)$, suppose $N_{G_i}(v) = \{w_1, w_2, w_3\}$. If $vw_1 \in U_{G_i}(X)$, then by Fact \ref{U'}, $X$ is a d-set of $G_i - vw_1$. Let $G_{i + 1} = G_i - vw_1$. Take $j \geq i$ to be maximal. Let $G' = G_{j + 1}$. For some $x \in V(G') \setminus X$, suppose $|N_{G'}(x)| = 3$. Since $X$ is a dominating set, we have $N_{G'}(x) \cap X \ne \emptyset$.  If $|N_{G'}(x) \cap X| \geq 2$, it contradicts the definition of $G'$. Thus $|N_{G'}(x) \cap X| = 1$. Now, $x \in T_{G'}(X)$. Let $T_1$ be the set of all $x$. For some $v \in X$, suppose $N_{G'}(v) = \{w_1, w_2, w_3\}$. By the definition of $G'$, we have $N_{G'}(v) \subseteq T_{G'}(X)$. If $|N_{G'}(w_1)| = 3$, let $y = w_1$. Otherwise, if $|N_{G'}(w_2)| = 3$, let $y = w_2$. Otherwise, if $|N_{G'}(w_3)| = 3$, let $y = w_3$. Otherwise, let $y = w_1$. Let $T_2$ be the set of all $y$. Let $T' = T_1 \cup T_2$. Let $G'' = G'(T')$. Therefore, $G''$ is the union of path (or cycle) components. Let $A$ be a path (or a cycle) component of $G''$. Let $Y = X \cup T'$. By Fact \ref{T'0}, $Y$ is a dominating set of $G''$, and so $N_A[Y(A)] = V(A)$. 
\end{proof}

That is, $|Y| - |X'| = \Sigma_{A \in \mathcal{A}} (|Y(A)| - |X'(A)|)$. In addition, take $G''$ as $|Y| - |X'|$ is maximum.
We prove our theorem by induction for $|Y| - |X'|$. First, let $|Y| - |X'| = 0$. Suppose that there exists a path $Q$ in $G$ such that $Q = b_1b_2b_3$ for some $b_1, b_3 \in X$ and some $b_2 \not \in X$. By Fact \ref{T'}, $Y$ is a d-set of $G''$. Suppose $P \cdots Q \subseteq G''$. Let $R = P \cdots Q$. Even for a path $R' = \alpha R \beta$ such that $\alpha, \beta \notin Y$, $Y(R')$ is not a d-set of $R'$, also for $R$, a contradiction. Thus $R \not \subseteq G''$. If $Q \subseteq G$, then $P \cdots b_2b_1 \subseteq G''$ or $P \cdots b_2b_3 \subseteq G''$, where $|N_G(b_1) \setminus N_G[X \setminus \{b_1\}]| = 2$ and $|N_G(b_3) \setminus N_G[X \setminus \{b_3\}]| = 2$. After all, by Lemma \ref{disjoint}, we have $|X| \leq \lceil |V(G)|/3 \rceil$. 

Suppose that if $|Y| - |X'| \leq k$ ($k \geq 0$), then $|X| \leq \lceil |V(G)|/3 \rceil$. Let $|Y| - |X'| = k + 1$. Suppose that there exists a path $Q$ in $G$ such that $Q = b_1b_2b_3$ for some $b_1, b_3 \in X$ and some $b_2 \not \in X$, and $P \cdots Q \subseteq A \in \mathcal{A}$, for otherwise, by Lemma \ref{disjoint}, we have $|X| \leq \lceil |V(G)|/3 \rceil$. Let $H_0 = A$ and $Y_0 = Y(A)$.
Let $i$ be an integer such that $0 \leq i \leq j - 1$ ($j \geq 1$). Take a vertex $v_i \in Y_i$ such that $N_{H_i}[v_i] \setminus \{v_i\} \subseteq V(H_i) \setminus Y_i$. Let $S(v_i) = N_{H_i}[v_i]$. Note that $|S(v_i)| = 3$. Let $e_i$ be a new edge between two vertices of $N_{H_i}(S(v_i)) \setminus S(v_i)$ if there exist. Let $H_{i + 1} = H_i - S(v_i) + e_i$. Let $Y_{i + 1}$ be constructed from $Y_i$ as follows; delete $v_i$, and for some $x_1, x_2 \in Y_i$, if $x_1x_2 \in E(H_{i + 1})$, then delete $x_2$ and add the vertex of $N_{H_{i + 1}}(x_2) \setminus \{x_1\}$ in order to take a vertex $v_{i + 1} \in Y_{i + 1}$ in the next step. Let $B = H_j$. Let $H''$ be a graph constructed from $G''$ by deleting $A$ and adding $B$. Let $O = (Y \setminus Y(A)) \cup Y_j$. Let g$Z'$ be a d-set of $H''$. Now, $|O(B)| - |Z'(B)| = |Y(A)| - |X'(A)| - 1$ for some $j$ such that $j \geq 1$. Thus $|O| - |Z'| \leq k$. By the induction hypothesis, $|X| - j \leq \lceil (|V(G)| - 3j)/3 \rceil$, which implies $|X| \leq \lceil |V(G)|/3 \rceil$. Therefore, the proof of Theorem \ref{T1} is complete. 
\end{proof}

\end{document}